\theoremstyle{plain}
\newtheorem{lem}{Lemma}[section]
\newtheorem{prop}[lem]{Proposition}
\newtheorem{thm}[lem]{Theorem}
\theoremstyle{definition}
\newtheorem{defn}[lem]{Definition}
\newtheorem{rmk}[lem]{Remark}
\newtheorem{construction}[lem]{Construction}
\newcommand{\depth}{\operatorname{depth}}	
\newcommand{\amp}{\operatorname{amp}}
\newcommand{\lotimes}{\otimes^{\mathbf{L}}} 
\newcommand{\biglotimes}{\bigotimes^{\mathbf{L}}}
\newcommand{\tor}{\operatorname{Tor}}
\newcommand{\im}{\operatorname{Im}}
\newcommand{\shift}{\mathsf{\Sigma}}
\newcommand{\Ker}{\operatorname{Ker}}
\newcommand{\ideal}[1]{\mathfrak{#1}}
\newcommand{\fm}{\ideal{m}}
\newcommand{\wt}{\widetilde}
\newcommand{\bbz}{\mathbb{Z}}
\newcommand{\xra}{\xrightarrow}
\renewcommand{\geq}{\geqslant}
\renewcommand{\leq}{\leqslant}
\renewcommand{\ker}{\Ker}
\numberwithin{equation}{lem}
\begin{document}

\bibliographystyle{amsplain}

\author{Sean K. Sather-Wagstaff}

\address{School of Mathematical and Statistical Sciences,
Clemson University,
O-110 Martin Hall, Box 340975, Clemson, S.C. 29634
USA}

\email{ssather@clemson.edu}

\urladdr{https://ssather.people.clemson.edu/}

\author{Hannah Altmann}

\address{College of Arts and Sciences, 
Dakota State University,
146L Habeger Science Center, 
820 N Washington Ave,
Madison, SD 57042
USA}

\email{hannah.altmann@dsu.edu}

\title{On Gerko's Strongly Tor-independent Modules}
\subjclass[2010]{Primary: 13D02; Secondary: 13D07, 13D09
}

\begin{abstract} 
Gerko proves that if an artinian local ring $(R,\mathfrak{m}_R)$ possesses a sequence of strongly Tor-independent modules of length $n$, then $\mathfrak{m}_R^n\neq 0$. This generalizes readily to Cohen-Macaulay rings. We present a version of this result for non-Cohen-Macaulay rings.
\end{abstract}

\maketitle

\section{Introduction}\label{sec201115a}

We are interested in how existence of certain sequences of modules over a local ring $(R,\fm_R)$ imposes restrictions on $R$. Specifically, we investigate what Gerko~\cite{gerko:sdc} calls strongly Tor-independent $R$-modules: A sequence $N_1,\ldots,N_n$ of $R$-modules is \emph{strongly Tor-independent} provided $\tor_{\geq 1}^R(N_{j_1}\otimes_R \cdots\otimes_R N_{j_t},N_{j_{t+1}})=0$ for all distinct $j_1,\ldots,j_{t+1}$. Gerko is led to this notion in his study of Foxby's semidualizing modules~\cite{foxby:gmarm} and Christensen's semidualizing complexes~\cite{christensen:scatac}. In particular, Gerko~\cite[Theorem 4.5]{gerko:sdc} proves that if $R$ is artinian and possesses a sequence of strongly Tor-independent modules of length $n$, then $\fm_R^n\neq 0$. This generalizes readily from artinian rings to Cohen-Macaulay rings. 

Our goal in this paper is to prove the following non-Cohen-Macaulay version of Gerko's result.  

\begin{thm}\label{thm200220c} Assume $(R,\fm_R)$ is a local ring. If $N_1,\ldots, N_n$ are strongly Tor-independent $R$-modules, then $n\leq \operatorname{ecodepth}(R)$.
\end{thm}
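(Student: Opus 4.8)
The strategy is to pass to a Cohen presentation of $\comp R$, repackage the sequence as the single module $N:=N_1\otimes_R\cdots\otimes_R N_n$ together with tight control on its minimal resolution, and then measure the homological size of $N$ against $c:=\operatorname{ecodepth}(R)$. Completion is faithfully flat and commutes with $\tor$, so $N_1\otimes_R\comp R,\dots,N_n\otimes_R\comp R$ are strongly Tor-independent over $\comp R$, and $\operatorname{ecodepth}(\comp R)=\operatorname{ecodepth}(R)$; hence I may assume $R$ is complete with infinite residue field and write $R=Q/\fa$ with $(Q,\fn)$ regular local and $\edim Q=\edim R$. By the Auslander--Buchsbaum formula over $Q$, this gives $c=\pd_Q R=\edim R-\depth R=\operatorname{ecodepth}(R)$. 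As in Gerko's setting I take each $N_i$ nonzero of infinite projective dimension over $R$ (the statement is otherwise vacuous already for $n=1$).

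\emph{The minimal resolution of $N$ is a tensor product.} Let $F^{(i)}\to N_i$ be minimal $R$-free resolutions. Then $F:=F^{(1)}\otimes_R\cdots\otimes_R F^{(n)}$ is a minimal $R$-free resolution of $N$: acyclicity follows by induction on $n$ from the defining vanishing $\tor_{\geq 1}^R(N_{j_1}\otimes_R\cdots\otimes_R N_{j_t},N_{j_{t+1}})=0$, which says that at each stage $(N_1\otimes_R\cdots\otimes_R N_t)\lotimes_R N_{t+1}$ has homology only in degree $0$; and minimality is automatic, since the differential of a tensor product of complexes whose differentials lie in $\fm F^{(i)}$ again lies in $\fm F$. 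Consequently $N\simeq N_1\lotimes_R\cdots\lotimes_R N_n$ in $\D(R)$, the Poincaré series satisfy $P^R_N(t)=\prod_{i=1}^n P^R_{N_i}(t)$, and, because $\pd_R N_i=\infty$, $\beta^R_j(N_i)\geq1$ for every $j\geq0$.

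\emph{Transfer to $Q$.} Let $P\to R$ be the minimal $Q$-free resolution of $R$, concentrated in degrees $[0,c]$, and set $\comp P:=P\otimes_Q R$, a minimal $R$-free complex of length $c$ with $\HH_\bullet(\comp P)=\tor_\bullet^Q(R,R)$ and $\sup\HH(\comp P)=c$. Then $G:=P\otimes_Q F=\comp P\otimes_R F$ is again a minimal $R$-free complex; its homology is $\HH_\bullet(G)=\tor_\bullet^Q(R,N)$, which vanishes above degree $c$ since $\pd_Q R=c$; its rank generating function is $P^Q_R(t)\cdot\prod_{i=1}^n P^R_{N_i}(t)$; and $G\simeq(R\lotimes_Q R)\lotimes_R N_1\lotimes_R\cdots\lotimes_R N_n$. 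Thus one has squeezed the derived tensor product of the complex $R\lotimes_Q R$ (whose top homology sits in degree $c$) with the $n$ modules $N_i$ (each of infinite projective dimension) into a complex whose homology is confined to the window $[0,c]$.

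\emph{Concluding $n\leq c$ — the main obstacle.} When $R$ is a complete intersection this is immediate: then $c=\operatorname{codim}R=\cx_R k$, and the product formula gives $\cx_R N=\sum_{i=1}^n\cx_R N_i\geq n$ (each summand $\geq1$ as $\pd_R N_i=\infty$), while $\cx_R N\leq\cx_R k=c$. In general the complexities can be infinite, so one argues instead with the change-of-rings spectral sequence $\tor^R_p(\tor^Q_q(R,R),N)\Rightarrow\tor^Q_{p+q}(R,N)$ (equivalently, filter $G=\comp P\otimes_R F$ by $\comp P$-degree): the abutment vanishes above degree $c$, while $\sup\HH(R\lotimes_Q R)=c$. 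The crux is to show that each successive derived tensor with an $N_i$ pushes the top nonzero homology strictly upward, i.e. $\sup\HH\!\big(X\lotimes_R N_i\big)\geq\sup\HH(X)+1$ along this chain; feeding this $n$ times into $\sup\HH(R\lotimes_Q R)=c$ forces $n\leq c$. Establishing this strict monotonicity is the one genuinely delicate step, and it is precisely where the full strength of strong (rather than merely pairwise) Tor-independence, together with the non-freeness of the $N_i$, must be used.
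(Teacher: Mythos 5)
Your setup (completion, Cohen presentation $R=\comp{R}=Q/\fa$, the fact that $F^{(1)}\otimes_R\cdots\otimes_R F^{(n)}$ is a minimal free resolution of $N=N_1\otimes_R\cdots\otimes_R N_n$, and the complete intersection case via complexity) is correct, but the proof stops exactly where the theorem begins. The last step is only a plan, and the inequality you propose to carry it out cannot be right as stated: if $\sup\HH(X\lotimes_R N_i)\geq\sup\HH(X)+1$ held along your chain starting from $X_0=R\lotimes_Q R$ with $\sup\HH(X_0)=c$, then after $n$ steps you would get $\sup\HH(G)\geq c+n$, while $\HH(G)=\tor^Q(R,N)$ vanishes above $c$; this forces $n\leq 0$, not $n\leq c$, i.e.\ such monotonicity would rule out \emph{all} strongly Tor-independent sequences of infinite projective dimension, which is false whenever $\operatorname{ecodepth}(R)>0$. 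So the "one genuinely delicate step" is not merely unproved --- the formulation you would need is different, and nothing in the Cohen-presentation picture supplies it: the vanishing of $\tor^Q_{>c}(R,-)$ alone does not bound $n$, and the change-of-rings spectral sequence argument is not carried out (its differentials can kill exactly the classes you would need to survive).

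For comparison, the paper gets the bound by a different mechanism in which the finiteness driving Gerko's artinian argument is restored artificially. It first inducts on $\depth(R)$: ordinary syzygies $N_i'$ stay strongly Tor-independent and are torsion-free, so one can factor out a regular element $x\in\fm_R\setminus\fm_R^2$ and drop to $R/xR$ without changing $\operatorname{ecodepth}$. In the depth-zero case it tensors with the Koszul complex $K$ on a minimal generating set of $\fm_R$, replaces $K$ by a quasiisomorphic local DG algebra $A$ with $\fm_A=A_+$ (so $\fm_A$ is \emph{nilpotent}: $\fm_A^{s+1}=0$ after truncation, $s=\amp\HH(A)=\operatorname{ecodepth}(R)$), and then runs a DG version of Gerko's argument: DG syzygies $K_i'$ are chosen so that amplitudes (not suprema) stay bounded by $s$ under derived tensor products (Propositions~\ref{prop180521c}--\ref{prop180605b}), and an injectivity-of-$\HH(\beta)$ argument shows $\fm_{\HH(A)}^{n-j}\HH(\lotimes K_i')=0$ when $\fm_A^n=0$, which forces $\fm_A^n\neq 0$ and hence $n\leq s$. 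The amplitude control and the nilpotency of $\fm_A$ are precisely the ingredients your approach lacks; without an analogue of them over $Q$, the reduction to $\pd_QR=c$ does not yield the theorem.
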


Here $\operatorname{ecodepth}(R)=\beta^R_0(\fm_R)-\depth(R)$ is the \emph{embedding codepth of $R$}, where $\beta^R_0(\fm_R)$ is the minimal number of generators of $\fm_R$. Note that our result does not recover Gerko's, but compliments it. Our proof is the subject of Section~\ref{sec201129a} below.

Part of the proof of our result is modeled on Gerko's proof with one crucial difference: where Gerko works over an artinian ring, we work over a finite dimensional DG algebra. See Sections~\ref{sec201115c} and \ref{sec201115b} for background material and foundational results, including our DG version of Gerko's notion of strong Tor-independence. Theorem~\ref{thm200220a} is our main result in the DG context, which is the culmination of Section~\ref{sec201122a}. Our proof relies on a DG syzygy construction of Avramov et al. \cite{avramov:htecdga}.

\section{DG Homological Algebra}\label{sec201115c}

Let $R$ be a nonzero commutative noetherian ring with identity. We work with $R$-complexes indexed homologically, i.e., where an $R$-complex $X$ has differential $\partial_i^X\colon X_i\to X_{i-1}$. The \emph{supremum} and \emph{infimum} of $X$ are respectively \begin{align*} \sup(X)&=\sup\{i\in \mathbb{Z}\mid X_i\neq 0\} & \inf(X)&=\inf\{i\in \mathbb{Z}\mid X_i\neq 0\}.\end{align*} The \emph{amplitude} of $X$ is $\amp(X)=\sup(X)-\inf(X).$ Frequently we consider these invariants applied to the total homology $H(X)$, e.g., as $\sup(H(X))$. 

As we noted in the introduction, the proof of Theorem~\ref{thm200220c} uses DG techniques which we summarize next. See, e.g., \cite{avramov:dgha,felix:rht} for more details.

A \emph{differential graded (DG) $R$-algebra} is an $R$-complex $A$ equipped with an $R$-linear chain map $A\otimes_R A\to A$ denoted $a\otimes a'\mapsto aa'$ that is unital, associative, and graded commutative. We simply write \emph{DG algebra} when $R=\mathbb{Z}$. The chain map condition here implies that this multiplication is also distributive and satisfies the \emph{Leibniz Rule}: $\partial(aa')=\partial(a)a'+(-1)^{|a|}a\partial(a')$ where $|a|$ is the homological degree of $a$. We say that $A$ is \emph{positively graded} provided $A_i=0$ for all $i<0$. For example, the trivial $R$-complex $R$ is a positively graded DG $R$-algebra, so too is every Koszul complex over $R$, using the wedge product. The \emph{underlying algebra} associated to $A$ is the $R$-algebra $A^{\natural}=\bigoplus_{i\in \mathbb{Z}} A_i$.

A positively graded DG $R$-algebra $A$ is \emph{local} provided $H_0(A)$ is noetherian, each $H_0(A)$-module $H_i(A)$ is finitely generated for all $i\geq 0$, $R$ is local, and the ring $H_0(A)$ is a local $R$-algebra.

Let $A$ be a DG $R$-algebra. A \emph{DG $A$-module} is an $R$-complex $X$ equipped with an $R$-linear chain map $A\otimes_R X\to X$ denoted $a\otimes x\mapsto ax$ that is unital and associative. For instance DG $R$-modules are precisely $R$-complexes. We say that $X$ is \emph{homologically bounded} if $\amp(H(X))<\infty$, and we say that $X$ is \emph{homologically finite} if $H(X)$ is finitely generated over $H_0(A)$. We write $\shift^n X$ for the \emph{$n$th shift of $X$} obtained by $(\shift^n X)_i=X_{i-n}$. Quasiisomorphisms between $R$-complexes, i.e., chain maps that induce isomorphisms on the level of homology, are identified with the symbol $\simeq$. 

Let $A$ be positively graded and let $X$ be a DG $A$-module such that $\inf(X)>-\infty$. We say that $X$ is \emph{semifree} if the underlying $A^{\natural}$-module $X^{\natural}$ is free. In this case a \emph{semibasis} for $X$ is a set of homogeneous elements of $X$ that is a basis for $X^{\natural}$ over $A^{\natural}$. A \emph{semifree resolution} of a DG $A$-module $Y$ with $\inf(H(Y))>-\infty$ is a quasiisomorphism $F\xra \simeq Y$ such that $F$ is semifree. The derived tensor product of DG $A$-modules $Y$ and $Z$ is $Y\lotimes_A Z\simeq F\otimes_A Z$ where $F\xra\simeq Y$ is a semifree resolution of $Y$. We say that $Y$ is \emph{perfect} if it has a semifree resolution $F\xra \simeq Y$ such that $F$ has a finite semibasis. 

Let $A$ be a local DG $R$-algebra, and let $Y$ be a homogically finite DG $A$-module. By~\cite[Proposition B.7]{avramov:htecdga} $Y$ has a \emph{minimal semifree resolution}, i.e., a semifree resolution $F\xra \simeq Y$ such that the semibasis for $F$ is finite in each homological degree and $\partial^F(F)\subseteq \fm_A F$.

\section{Perfect DG Modules and Tensor Products}\label{sec201115b}

Throughout this section, let $A$ be a positively graded commutative homologically bounded DG algebra, say $\amp(H(A))=s$,
and assume that $A\not\simeq 0$. 

\

Most of this section focuses on four foundational results on perfect DG modules.

\begin{lem}\label{lem180324b}
Let  $L$ be a non-zero semifree DG $A$-module with a  semibasis $B$ concentrated in a single degree $n$.
Then $L\cong\shift^nA^{(B)}$.
In particular, $\inf(H(L))=n$ and $\sup(H(L))=s+n$ and $\amp(H(L))=s$.
\end{lem}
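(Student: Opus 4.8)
The plan is to prove directly that $L$ is isomorphic, as a DG $A$-module, to $\shift^nA^{(B)}$, and then to read off the homological invariants from those of $A$. First I would record the underlying graded structure: since $B$ is a semibasis for $L$ concentrated in homological degree $n$, the underlying $A^{\natural}$-module $L^{\natural}$ is free on $B$ with every basis element in degree $n$, so $L^{\natural}\cong\bigoplus_{b\in B}\shift^nA^{\natural}$ as graded $A^{\natural}$-modules; because $A$ is positively graded we get $(\shift^nA^{\natural})_i=A^{\natural}_{i-n}=0$ for $i<n$, hence $L_i=0$ for $i<n$.

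The key point is that the differential is then forced. For each $b\in B$ the element $\partial^L(b)$ lies in $L_{n-1}=0$, so $\partial^L(b)=0$; hence, by the Leibniz rule, $\partial^L\bigl(\sum_b a_bb\bigr)=\sum_b\partial^A(a_b)\,b$ for any element $\sum_b a_bb$ with $a_b\in A^{\natural}$. This is exactly the differential on $\shift^nA^{(B)}$, so the evident identification of underlying graded modules is an isomorphism of DG $A$-modules $L\cong\shift^nA^{(B)}$.

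For the homological statements, apply $H(-)$ and use that homology commutes with shifts and arbitrary direct sums: $H(L)\cong\bigoplus_{b\in B}\shift^nH(A)$. Since $L\neq0$ the set $B$ is nonempty, and since $A\not\simeq0$ we have $H(A)\neq0$. Moreover $\inf(H(A))=0$: if instead $H_0(A)=0$ then $1=\partial^A(u)$ for some $u\in A_1$, and then $z=\pm\partial^A(zu)$ for every cycle $z$ of positive degree, forcing $H(A)=0$, a contradiction with $A\not\simeq0$. Together with the standing hypothesis $\amp(H(A))=s$ this gives $\inf(H(A))=0$ and $\sup(H(A))=s$, hence $\inf(H(L))=n$, $\sup(H(L))=n+s$, and $\amp(H(L))=s$.

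All of this is essentially routine; the only places calling for a little care are the sign conventions implicit in the shift $\shift^n$ when matching $\partial^L$ with the differential of $\shift^nA^{(B)}$, and the normalization fact that a positively graded DG algebra with $A\not\simeq0$ has $\inf(H(A))=0$, which I would handle as above. I do not expect a genuine obstacle.
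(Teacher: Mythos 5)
Your proposal is correct and follows essentially the same route as the paper: reduce to identifying $L$ with $\shift^nA^{(B)}$ by noting that $L_{n-1}=0$ (positivity of $A$) forces $\partial^L(b)=0$ for $b\in B$, so the Leibniz rule pins down the differential as $\sum_b a_bb\mapsto\sum_b\partial^A(a_b)b$. The only difference is that you also spell out the normalization $\inf(H(A))=0$ (unit-is-a-boundary argument) to justify the "in particular" clause, which the paper leaves implicit.
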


\begin{proof}
It suffices to prove that $L\cong\shift^nA^{(B)}$. Apply an appropriate shift to assume without loss of generality that $n=0$.

The semifree/semibasis assumptions tell us that every
element $x\in L$ has the form $\sum_{e\in B}^{\text{finite}}a_ee$; the linear independence of the semibasis
tells us that this representation is essentially unique.
Since $A$ is positively graded, we have $L_{-1}=0$, so
$\partial^L(e)=0$ for all $e\in B$. Hence, the Leibniz rule for $L$ implies that
$$
\partial^L\left(\sum_{e\in B}^{\text{finite}}a_ee\right)=\sum_i^{\text{finite}}\partial^A(a_e)e+\sum_i^{\text{finite}}(-1)^{|a_e|}a_e\partial^L(e)
=\sum_i^{\text{finite}}\partial^A(a_e)e.$$
From this, it follows that the map $A^{(B)}\to L$ given by the identity on $B$ is an isomorphism.
\end{proof}

\begin{prop}\label{prop180324a}
Let $L$ be a non-zero semifree DG $A$-module with a  semibasis $B$ concentrated in degrees $n,n+1,\ldots,n+m$ where $n,m\in\bbz$
and $m
\geq 0$.
Then $\inf(H(L))\geq n$ and $\sup(H(L))\leq s+n+m$,
so $\amp(H(L))\leq s+m$.
\end{prop}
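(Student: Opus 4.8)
The plan is to induct on $m$, using Lemma~\ref{lem180324b} as the base case $m=0$. For the inductive step, suppose $L$ has a semibasis $B$ concentrated in degrees $n,\dots,n+m$ with $m\geq 1$. First I would split off the bottom degree: let $B_n\subseteq B$ be the part of the semibasis in degree $n$, and let $B'=B\ssm B_n$, which is concentrated in degrees $n+1,\dots,n+m$. Since $A$ is positively graded, each $e\in B_n$ satisfies $\partial^L(e)=0$ (its image would land in $L_{n-1}$, which is spanned over $A^{\natural}$ by elements of $B$ in degrees $\leq n-1$, of which there are none—wait, $\partial^A$ lowers degree, so one must check this carefully, but the point is $\partial^L(e)\in (A_{\geq 1}\cdot B)_{n-1}\cap(\text{no degree-}n\text{ contribution})$, which forces $\partial^L(e)=0$ as in Lemma~\ref{lem180324b}). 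Thus the $A^{\natural}$-submodule $L'$ generated by $B_n$ is a DG $A$-submodule, semifree on the single-degree semibasis $B_n$, and $L/L'$ is semifree on (the image of) $B'$.

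Next I would feed the short exact sequence of DG $A$-modules
\[
0\to L'\to L\to L/L'\to 0
\]
into the associated long exact sequence in homology. By Lemma~\ref{lem180324b}, $\inf(H(L'))=n$ and $\sup(H(L'))=s+n$. By the inductive hypothesis applied to $L/L'$ (a semifree module with semibasis in degrees $n+1,\dots,n+m$), $\inf(H(L/L'))\geq n+1$ and $\sup(H(L/L'))\leq s+n+m$. For the supremum bound on $H(L)$: from the piece $H_i(L)\to H_i(L/L')$ with $H_{i-1}(L')$ on the right, if $i>s+n+m$ then $H_i(L/L')=0$ and $H_{i-1}(L')=0$ (since $i-1\geq s+n+m\geq s+n+1>s+n$), so $H_i(L)=0$. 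For the infimum bound: if $i<n$ then $H_i(L')=0$ and $H_i(L/L')=0$, so $H_i(L)=0$; hence $\inf(H(L))\geq n$. The amplitude bound $\amp(H(L))\leq s+m$ follows by subtraction.

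I should double-check one subtlety: that $L/L'$ really is semifree with semibasis concentrated in degrees $n+1,\dots,n+m$, and in particular that it is nonzero so the inductive hypothesis applies—if $B'=\varnothing$ then $m$ could still be positive only if $B$ had elements in degree $n+m$, contradiction, so $B'\neq\varnothing$ whenever $m\geq 1$ and $B$ genuinely uses degree $n+m$; if $B$ happens not to use some intermediate or top degree the same argument runs with a possibly smaller effective $m$, only strengthening the bound. The main obstacle is the bookkeeping in the first step: verifying that the elements of $B_n$ are cycles and that $L'$ is a genuine DG submodule with $L/L'$ semifree on the residual basis. This is exactly the computation already carried out in the proof of Lemma~\ref{lem180324b}, restricted to the lowest-degree basis elements, so it is routine but must be stated. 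Everything after that is a formal diagram chase in the homology long exact sequence.
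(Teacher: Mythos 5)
Your argument is correct and is essentially the mirror image of the paper's: both proofs induct on $m$ and feed a short exact sequence $0\to L'\to L\to L/L'\to 0$ into the long exact sequence in homology, but the paper takes $L'$ to be the span of the semibasis elements of degree strictly below $n+m$ (so the inductive hypothesis applies to $L'$ and Lemma~\ref{lem180324b} to the single-degree quotient $L/L'$), whereas you take $L'$ to be the span of the degree-$n$ elements (Lemma for the submodule, induction for the quotient). Your variant has the small advantage that the verification that $L'$ is a DG submodule is immediate—since $A$ is positively graded and $B$ lives in degrees $\geq n$, one has $L_{n-1}=0$, so the degree-$n$ basis elements are cycles—whereas the paper's splitting rests on the (also easy, but outsourced to~\cite{avramov:htecdga}) observation that in any boundary the coefficients of top-degree basis elements would have negative degree and hence vanish. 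Two points to tidy: (i) for the supremum bound the relevant exact piece is $H_i(L')\to H_i(L)\to H_i(L/L')$, so what you need is $H_i(L')=0$ for $i>s+n+m$ (which does hold, since then $i>s+n$), not $H_{i-1}(L')=0$; (ii) if no semibasis element actually sits in degree $n$, then $L'=0$ and Lemma~\ref{lem180324b} does not literally apply, but then $L\cong L/L'$ and the inductive hypothesis gives even stronger bounds—this is the analogue of the paper's separate treatment of the case $L=L'$, and is worth stating explicitly.
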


\begin{proof}
It suffices to show that $\inf(H(L))\geq n$ and $\sup(H(L))\leq s+n+m$. We induct on $m$. 
The base case $m=0$ follows from Lemma~\ref{lem180324b}.

For the induction step, assume that $m\geq 1$ and that the result holds for 
semifree DG $A$-modules with  semibasis concentrated in degrees $n,n+1,\ldots,n+m-1$.
Set
$$B'=\{e\in B\mid |e|<n+m\}$$
and let $L'$ denote the semifree submodule of $L$ spanned over $A$ by $B'$. 
(See the first paragraph of the proof of~\cite[Proposition~4.2]{avramov:htecdga} for further details.)
Note that $L'$ has semibasis $B'$ concentrated in degrees $n,n+1,\ldots,n+m-1$.
In particular, our induction assumption applies to $L'$ to give
$\inf(H(L'))\geq n$ and $\sup(H(L'))\leq s+n+m-1$.

If $L=L'$, then we are done by our induction assumption. So assume that $L\neq L'$. 
Then the quotient $L/L'$ is semifree and non-zero with semibasis concentrated in degree $n+m$. 
So, Lemma~\ref{lem180324b} implies that 
$\inf(H(L/L'))=n+m$ and $\sup(H(L/L'))= s+n+m$.
Now, consider the short exact sequence
\begin{equation}\label{eq201115d}
0\to L'\to L\to L/L'\to 0.
\end{equation}
The desired conclusions for $L$ follow from the associated long exact sequence in homology. 
\end{proof}

Now, we use the preceding two results to analyze derived tensor products.

\begin{lem}\label{lem180521b} Let $L$ be a non-zero semifree DG $A$-module with a semibasis $B$ concentrated in a single degree, say $n$, and let $Y$ be a homologically bounded DG $A$-module. Then $L\lotimes_A Y\simeq \shift^nY^{(B)}$. In particular, $\inf(H(L\lotimes_A Y)) =\inf(H(Y))+n$ and $\sup(H(L\lotimes_A Y)) =\sup(H(Y))+n$ and $\amp(H(L\lotimes_A Y)) =\amp(H(Y))$.
\end{lem}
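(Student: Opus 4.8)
The plan is to reduce this statement to Lemma~\ref{lem180324b} by the same shift-and-compute strategy used there. First I would apply the shift $\shift^{-n}$ to reduce to the case $n=0$, so that $B$ is concentrated in degree $0$. Then by Lemma~\ref{lem180324b} we have an isomorphism $L\cong A^{(B)}$ of DG $A$-modules. Since $L$ is semifree, it is already a semifree resolution of itself, so $L\lotimes_A Y\simeq L\otimes_A Y\cong A^{(B)}\otimes_A Y\cong (A\otimes_A Y)^{(B)}\cong Y^{(B)}$, where the tensor product distributes over the (possibly infinite) direct sum and $A\otimes_A Y\cong Y$ canonically. Undoing the shift gives $L\lotimes_A Y\simeq \shift^n Y^{(B)}$.

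Next I would read off the numerical consequences. Homology commutes with direct sums and with shifts, so $H_i(\shift^n Y^{(B)})\cong H_{i-n}(Y)^{(B)}$ for every $i$. Since $B\neq\emptyset$ (as $L\neq 0$), the module $H_{i-n}(Y)^{(B)}$ is nonzero precisely when $H_{i-n}(Y)$ is nonzero. Therefore $\sup(H(L\lotimes_A Y))=\sup(H(Y))+n$ and $\inf(H(L\lotimes_A Y))=\inf(H(Y))+n$, and subtracting gives $\amp(H(L\lotimes_A Y))=\amp(H(Y))$. The hypothesis that $Y$ is homologically bounded is what guarantees $\sup$ and $\inf$ are finite so that the amplitude statement is meaningful, though the displayed identities hold regardless.

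There is essentially no serious obstacle here; the only point requiring a word of care is the interaction of the derived tensor product with an arbitrary (not necessarily finite) direct sum $A^{(B)}$, and the fact that we may compute $L\lotimes_A Y$ using the semifree resolution $L\xra{\simeq}L$ rather than a resolution of $Y$ — both are standard, the latter because $\lotimes_A$ is balanced and $L$ is semifree. I expect the proof to be as short as that of Lemma~\ref{lem180324b}, mostly a matter of assembling the isomorphism $L\lotimes_A Y\simeq\shift^n Y^{(B)}$ and then invoking that homology preserves shifts and coproducts.
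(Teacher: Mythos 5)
Your proof is correct and follows exactly the route the paper intends: the paper's proof of this lemma is simply ``Immediate from Lemma~\ref{lem180324b},'' and your argument — using Lemma~\ref{lem180324b} to identify $L\cong\shift^nA^{(B)}$, computing $L\lotimes_AY$ with $L$ as its own semifree resolution to get $\shift^nY^{(B)}$, and reading off $\inf$, $\sup$, and $\amp$ since homology commutes with shifts and coproducts and $B\neq\emptyset$ — is precisely the spelled-out version of that reduction.
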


\begin{proof} Immediate from Lemma~\ref{lem180324b}.
\end{proof}

\begin{prop}\label{prop180521c} Let $L$ be a non-zero semifree DG $A$-module with a semibasis $B$ concentrated in degrees $n$, $n+1$,$\ldots n+m$ where $n,m\in \bbz$ and $m\geq 0$, and let $Y$ be a homologically bounded DG $A$-module. Then $\inf(H(L\lotimes_A Y)) \geq \inf(H(Y))+n$ and $\sup(H(L\lotimes_A Y)) \leq \sup(H(Y))+n+m$, so $\amp(H(L\lotimes_A Y)) \leq \amp(H(Y))+m$.
\end{prop}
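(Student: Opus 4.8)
The plan is to mimic the proof of Proposition~\ref{prop180324a}, using an induction on $m$ and the short exact sequence of the semifree subcomplex filtration, but now tensoring everything with $Y$. First I would handle the base case $m=0$, which is immediate from Lemma~\ref{lem180521b}: there $L\lotimes_A Y\simeq \shift^n Y^{(B)}$, so $\inf(H(L\lotimes_A Y))=\inf(H(Y))+n$ and $\sup(H(L\lotimes_A Y))=\sup(H(Y))+n$, matching the claimed bounds with $m=0$.

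For the induction step, assume $m\geq 1$ and that the result holds for semifree DG $A$-modules with semibasis concentrated in degrees $n,n+1,\ldots,n+m-1$. As in the proof of Proposition~\ref{prop180324a}, set $B'=\{e\in B\mid |e|<n+m\}$ and let $L'$ be the semifree DG submodule of $L$ spanned over $A$ by $B'$, so that $L'$ has semibasis concentrated in degrees $n,\ldots,n+m-1$ and the quotient $L/L'$ is semifree with semibasis concentrated in the single degree $n+m$ (or $L=L'$, in which case we are already done by induction). The key point is that the short exact sequence $0\to L'\to L\to L/L'\to 0$ is degreewise split over $A^{\natural}$ (since $L/L'$ is semifree, hence $A^{\natural}$-free, so the surjection $L^{\natural}\to (L/L')^{\natural}$ splits), so applying $-\otimes_A Y$ — equivalently $-\lotimes_A Y$, since everything in sight is semifree so the tensor product already computes the derived tensor product — yields a short exact sequence of DG $A$-modules
\begin{equation*}
0\to L'\lotimes_A Y\to L\lotimes_A Y\to (L/L')\lotimes_A Y\to 0.
\end{equation*}
By the induction hypothesis applied to $L'$ we get $\inf(H(L'\lotimes_A Y))\geq \inf(H(Y))+n$ and $\sup(H(L'\lotimes_A Y))\leq \sup(H(Y))+n+m-1$; by Lemma~\ref{lem180521b} applied to $L/L'$ we get $\inf(H((L/L')\lotimes_A Y))=\inf(H(Y))+n+m$ and $\sup(H((L/L')\lotimes_A Y))=\sup(H(Y))+n+m$. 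Feeding these into the long exact homology sequence associated to the displayed short exact sequence gives the two desired bounds for $L\lotimes_A Y$, and the amplitude bound follows by subtraction.

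The only genuine subtlety — and the step I would be most careful about — is justifying that tensoring the short exact sequence \eqref{eq201115d} with $Y$ stays short exact and simultaneously computes the derived tensor product: this rests on the fact that $L'$, $L$, and $L/L'$ are all semifree, so $L^{\natural}\cong (L')^{\natural}\oplus (L/L')^{\natural}$ as $A^{\natural}$-modules and the sequence is degreewise $A^{\natural}$-split, hence remains exact after $-\otimes_A Y$, and moreover each term tensored with $Y$ already represents the corresponding derived tensor product. (One could alternatively cite the first paragraph of the proof of \cite[Proposition~4.2]{avramov:htecdga}, as in the proof of Proposition~\ref{prop180324a}, where this filtration of $L$ by semifree submodules is set up.) Everything else is a formal diagram chase in the long exact homology sequence, exactly parallel to the proof of Proposition~\ref{prop180324a}.
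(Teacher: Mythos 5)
Your proposal is correct and follows essentially the same route as the paper: induction on $m$ with the base case from Lemma~\ref{lem180521b}, the same filtration $L'\subseteq L$ from the proof of Proposition~\ref{prop180324a}, and the long exact homology sequence obtained from tensoring \eqref{eq201115d} with $Y$ (the paper phrases this as the associated distinguished triangle in the derived category, while you justify exactness directly via the degreewise $A^{\natural}$-splitting — the same content). Your explicit handling of the case $L=L'$ and the remark that semifreeness makes the ordinary tensor product compute the derived one are fine and consistent with the paper's argument.
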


\begin{proof} As in the proof of Proposition~\ref{prop180324a}, we induct on $m$. The base case $m=0$ follows from Lemma~\ref{lem180521b}.

For the induction step, assume $m\geq 1$ and the result holds for semifree DG $A$-modules with semibasis concentrated in degrees $n$, $n+1$, $\ldots, n+m-1$ and $Y\in D_b(A)$. We work with the notation from the proof of Proposition~\ref{prop180324a}, and we assume that $L\neq L'$. The exact sequence \eqref{eq201115d} of semi-free DG modules gives rise to the following distinguished triangle in $\mathsf{D}(A)$. 
$$L'\lotimes_A Y\to L\lotimes_A Y\to (L/L')\lotimes_A Y\to$$ Another long exact sequence argument gives the desired conclusion.
\end{proof}
 
We close this section with our DG version of strongly Tor-independent modules.

\begin{defn} The DG $A$-modules $K_1,\ldots, K_n$ are said to be \emph{strongly Tor-independent} if for any subset $I\subset \{1,\ldots, n\}$ we have $\amp(H(\lotimes_{i\in I} K_i))\leq s$.
\end{defn}

\begin{rmk}\label{rmk180531a}  It is worth noting that the definition of $K_1,\ldots, K_n$ being strongly Tor-independent includes $\amp(H(K_i)))\leq s$ for all $i=1,\ldots, n$. Also, if $K_1,\ldots, K_n$ are strongly Tor-independent, then so is any reordering by the commutativity of tensor products.
\end{rmk}

\section{Syzygies and Strongly Tor-independent DG modules}\label{sec201122a}

Throughout this section, let $(A,\fm_A)$ be a local homologically bounded DG algebra, say $\amp(H(A))=s$,
and assume that $A\not\simeq 0$ and $\fm_A=A_+$. It follows that $A_0$ is a field.

\

The purpose of this section is to provide a DG version of part of a result of Gerko~\cite[Theorem 4.5]{gerko:sdc}. 
Key to this is the following slight modification of the syzygy construction of Avramov et al.\ mentioned in the introduction.

\begin{construction}\label{const190804a}  Let $K$ be a homologically finite DG $A$-module. Let $F\simeq K$ be a minimal semifree resolution of $K$, and let $E$ be a semibasis for $F$. Let $F^{(p)}$ be the semifree DG $A$-submodule of $F$ spanned by $E_{\leqslant p}:=\cup_{m\leqslant p} E_m$. 

Set $t=\sup(H(K))$, and consider the soft truncation $\wt{K}=\tau_{\leqslant r}(F)$ for a fixed integer $r\geq t$. Note that the natural morphism $F\to \wt{K}$ is a surjective quasiisomorphism of DG $A$-modules, so we have $\wt{K}\simeq F\simeq K$. Next, set $L=F^{(r)}$, which is semifree with a finite semibasis $E_{\leqslant r}$. Furthermore, the composition $\pi$ of the natural morphisms $L=F^{(r)}\to F\to \wt{K}$ is surjective because the morphism $F\to \wt{K}$ is surjective, the morphism $L\to F$ is surjective in degrees $\leqslant r$, and $\wt{K}_i=0$ for all $i>r$. Set $\operatorname{Syz}_r(K)=\ker(\pi)\subseteq L$ and let $\alpha\colon\operatorname{Syz}_r(K)\to L$ be the inclusion map.
\end{construction}

\begin{prop}\label{prop190804b} Let $K$ be a homologically finite DG $A$-module. With the notation of Construction~\ref{const190804a}, there is a short exact sequence of morphisms of DG $A$-modules
\begin{equation}\label{eq190812a}
0 \to \operatorname{Syz}_r(K) \xrightarrow{\alpha} L \xrightarrow{\pi} \wt{K}\to 0\end{equation}
such that $L$ is semifree with a finite semibasis and where $\wt{K}\simeq K$ and $\im(\alpha)\subseteq A_+L$.
\end{prop}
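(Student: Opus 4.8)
The plan is to verify that Construction~\ref{const190804a} already produces all the structure claimed in the statement; essentially nothing new needs to be built, only assembled. First I would recall from the construction that $\pi\colon L=F^{(r)}\to\wt K$ is a surjective morphism of DG $A$-modules: surjectivity was argued in the construction (the map $F\to\wt K=\tau_{\leqslant r}(F)$ is surjective, $L\to F$ is surjective in degrees $\leqslant r$, and $\wt K_i=0$ for $i>r$). Defining $\operatorname{Syz}_r(K)=\ker\pi$ with $\alpha$ the inclusion then immediately gives the short exact sequence \eqref{eq190812a} of DG $A$-modules: exactness at $\operatorname{Syz}_r(K)$ and at $L$ is the definition of kernel, and exactness at $\wt K$ is surjectivity of $\pi$. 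That $L$ is semifree with a finite semibasis is also recorded in the construction, since $E_{\leqslant r}$ is finite (the minimal semifree resolution of a homologically finite DG module over a local DG algebra has finite semibasis in each degree, and $r$ bounds finitely many degrees because $F$ is bounded below as $\inf(H(K))>-\infty$), and $\wt K\simeq F\simeq K$ is noted there as well.

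The only point requiring a short argument is the containment $\im(\alpha)\subseteq A_+L$, i.e.\ $\ker(\pi)\subseteq A_+L=\fm_A L$. Here I would use minimality of $F$: by construction $\partial^F(F)\subseteq\fm_A F$, and $L=F^{(r)}$ is a semifree submodule, so $\partial^L(L)\subseteq \fm_A L$ as well. Now take a homogeneous $x\in\ker(\pi)$, say of degree $i$. If $i>r$ there is nothing to prove since $\wt K_i=0$ forces no constraint, but actually I should handle it by degree: write $x=\sum_{e\in E_{\leqslant r}} a_e e$. The claim is that if $x\notin\fm_A L$ then some coefficient $a_e$ with $|e|=i$ is a unit, i.e.\ lies in $A_0\setminus 0$; reducing modulo $\fm_A$ we get a nonzero element $\bar x$ of $(L/\fm_A L)_i=(F^{(r)}/\fm_A F^{(r)})_i$. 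The composite $L\xrightarrow{\pi}\wt K\to \wt K/\fm_A\wt K$ — or rather, comparing $L\to\wt K$ with $F\to\wt K$ — the point is that on generators in degrees $\leqslant r$ the map $L\to F\to\wt K$ is, after killing $\fm_A$, the identity portion of the isomorphism, so $\pi$ is surjective modulo $\fm_A$ and, because $F\to\wt K$ is a surjective quasiisomorphism with minimal $F$, the induced map $L/\fm_A L\to \wt K/\fm_A\wt K$ is injective in degrees $\leqslant r$. Hence $\bar x\neq 0$ would force $\pi(x)\notin\fm_A\wt K$, contradicting $\pi(x)=0$. Therefore $x\in\fm_A L$, giving $\ker(\pi)\subseteq A_+L$.

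The step I expect to be the main obstacle is precisely this last containment $\im(\alpha)\subseteq A_+L$: one must pin down why $\pi$ does not kill anything outside $\fm_A L$, which comes down to a careful comparison between the minimal semifree resolution $F$, its submodule $F^{(r)}$, and the soft truncation $\wt K=\tau_{\leqslant r}(F)$, tracking what happens modulo $\fm_A$ in each homological degree. Everything else — the short exact sequence, semifreeness with finite semibasis, and the quasiisomorphism $\wt K\simeq K$ — is already contained in or is a direct formal consequence of Construction~\ref{const190804a}, so I would present those briefly and spend the bulk of the proof on the minimality argument. For the latter I would lean on the cited structure of Avramov et al.~\cite[Proposition~4.2 and Appendix~B]{avramov:htecdga}, from which the behavior of $F^{(p)}$ and its relation to truncations is extracted.
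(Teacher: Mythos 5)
Your proposal is correct and takes essentially the same route as the paper, which simply defers to the argument of \cite[Proposition~4.2]{avramov:htecdga}: the exact sequence, finite semibasis, and $\wt{K}\simeq K$ are immediate from Construction~\ref{const190804a}, and the containment $\ker(\pi)\subseteq\fm_A L$ is exactly the degree-wise minimality check you outline (the map is the identity in degrees $<r$, minimality gives $\im(\partial^F_{r+1})\subseteq(\fm_A F)_r=(\fm_A L)_r$ in degree $r$, and in degrees $>r$ every element of $L$ has coefficients of positive degree). Only a phrasing slip: for $i>r$ it is not that ``there is nothing to prove'' --- there $\ker(\pi)_i=L_i$ --- but your coefficient argument does cover that case, since no semibasis element of $L$ has degree exceeding $r$.
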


\begin{proof} Argue as in the proof of \cite[Proposition~4.2]{avramov:htecdga}. 
\end{proof}

Our proof of Theorem~\ref{thm200220c} hinges on the behavior for syzygies documented in the following four results.

\begin{lem}\label{prop180523b} Let $K$ be a homologically finite DG $A$-module with $\amp(H(K))\leq s$ and $K'=\operatorname{Syz}_r(K)$ where $r\geq \sup(H(K))$. Then $\sup(H(K'))\leq s+r$ and $\inf(H(K'))\geq r$. Therefore, $\amp(H(K'))\leq s$.
\end{lem}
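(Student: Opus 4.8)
The plan is to extract everything from the short exact sequence in Proposition~\ref{prop190804b}, namely
$$0 \to K' \xrightarrow{\alpha} L \xrightarrow{\pi} \wt{K}\to 0$$
where $L$ is semifree with a finite semibasis and $\wt K \simeq K$. First I would pin down the degrees in which the semibasis of $L$ lives. Recall $L = F^{(r)}$ where $F\simeq K$ is the \emph{minimal} semifree resolution; since $A$ is positively graded and $F$ is minimal, $\inf(H(F)) = \inf(F^\natural)$, and as $\amp(H(K))\leq s$ with $\sup(H(K)) =: t \leq r$ we get $\inf(H(K)) \geq t - s \geq r - \ldots$ — more usefully, minimality forces the semibasis $E$ of $F$ to be concentrated in degrees $\geq \inf(H(K))$, and by construction $L$ keeps only the part $E_{\leq r}$. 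So $L$ has a finite semibasis concentrated in degrees between $\inf(H(K))$ and $r$; in particular in degrees within an interval of length $r - \inf(H(K)) = r - (\sup(H(K)) - \amp(H(K))) \leq r - t + s$.

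Next I would feed this into Proposition~\ref{prop180324a} (with $Y$ absent, or trivially via $Y = A$): with the semibasis of $L$ concentrated in degrees $n, n+1, \ldots, n+m$ where $n = \inf(H(K))$ and $n + m \leq r$, that proposition gives $\inf(H(L)) \geq n = \inf(H(K))$ and $\sup(H(L)) \leq s + r$. Then I apply the long exact homology sequence of the short exact sequence above. On the low end: $H_i(L) = 0$ for $i < \inf(H(K))$ and $H_i(\wt K) = H_i(K) = 0$ for $i < \inf(H(K))$, but more is true — $\wt K = \tau_{\leq r}(F)$ has homology exactly $H(K)$, so $H_i(\wt K) = 0$ for $i > t$ as well. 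The connecting map $H_{i+1}(\wt K) \to H_i(K')$ together with the surjection $H_i(K') \to$ (kernel in $H_i(L)$) shows $H_i(K') = 0$ once $i+1 \leq \inf(H(K))$, i.e. $H_i(K') = 0$ for $i < \inf(H(K)) - 1$; this alone is not sharp enough, so the real input for the lower bound $\inf(H(K')) \geq r$ must come from the fact that $\pi$ is a \emph{quasiisomorphism}: since $L \xrightarrow{\pi} \wt K$ and $\wt K \simeq K$, the long exact sequence degenerates and $H(K') \cong H(\cone(\pi)[-1])$, but $\pi$ being a quasiisomorphism would make $K'$ acyclic — so instead I should use that $\alpha$ lands in $A_+ L$, or rather reconsider: $\pi$ is surjective but is it a quasiisomorphism? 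In Construction~\ref{const190804a} the composite $L \to F \to \wt K$ need not be a quasiisomorphism since $L = F^{(r)} \subsetneq F$ in general. So the long exact sequence is genuinely three-term.

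The cleaner route for the lower bound: in degrees $i \leq r$ the map $L_i \to \wt K_i$ is the minimal-resolution map $F_i \to \tau_{\leq r}(F)_i$, which is \emph{surjective}, and minimality of $F$ means the induced map $H_i(L) \to H_i(\wt K)$ is zero for $i$ in the range where $L$ and $F$ agree through one more degree — concretely, for $i < r$ we have $L_{i+1} = F_{i+1}$ and hence $H_i(L) \to H_i(\wt K) = H_i(F)$ is the identity-induced map; but $L$ differs from $F$ at degree $r+1$. I would instead observe directly: $\coker(L \hookrightarrow F)$ is semifree on $E_{>r}$, concentrated in degrees $\geq r+1$, so by Proposition~\ref{prop180324a} its homology sits in degrees $\geq r+1$; the short exact sequence $0 \to L \to F \to F/L \to 0$ then gives $H_i(L) \cong H_i(F) = H_i(K)$ for $i \leq r - 1$ and an exact tail at degrees $r, r+1$. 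Combining with $F/L \simeq \wt K' $ shifted appropriately and chasing the octahedron relating $K'$, $F/L$, and the fibers, one lands on $\inf(H(K')) \geq r$. For the upper bound: $\sup(H(K')) \leq \sup(H(L)) \leq s + r$ directly from the long exact sequence since $H_{i+1}(\wt K) = 0$ for $i \geq r$ (as $r \geq t = \sup(H(\wt K))$) and $H_i(L) = 0$ for $i > s + r$. The main obstacle is the lower bound $\inf(H(K')) \geq r$: it requires carefully combining minimality of $F$ (so that $H(F) = H(K)$ has amplitude $\leq s$ and the semibasis degrees are controlled) with the truncation $\wt K = \tau_{\leq r}(F)$ and the precise relationship between $L = F^{(r)}$ and $F$, rather than a naive long-exact-sequence count, which only gives $\inf(H(K')) \geq \inf(H(K)) - 1$. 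Once both bounds are in hand, $\amp(H(K')) \leq (s+r) - r = s$ is immediate.
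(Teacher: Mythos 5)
Your upper bound is fine and is the paper's argument: the semibasis of $L=F^{(r)}$ sits in degrees $\le r$, so $\sup(H(L))\le s+r$ by Proposition~\ref{prop180324a}, while $\sup(H(\wt K))=\sup(H(K))\le r$, and the long exact sequence of $0\to K'\to L\to\wt K\to 0$ then gives $\sup(H(K'))\le s+r$.

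The genuine gap is the lower bound $\inf(H(K'))\ge r$, which you yourself identify as the main obstacle and never actually establish. The decisive observation is elementary and degreewise, not homological: since $A$ is positively graded, $L_i=(F^{(r)})_i=F_i$ for all $i\le r$, and $\wt K_i=(\tau_{\le r}F)_i=F_i$ for all $i<r$, and in those degrees $\pi$ is the identity; hence $\pi_i$ is an isomorphism (not merely surjective) for $i<r$, so $K'=\ker(\pi)$ is the zero module in degrees $<r$ and $\inf(H(K'))\ge\inf(K')\ge r$. This is exactly how the paper argues, with no minimality or derived-category input needed for this half. You record only surjectivity of $\pi_i$ and then drift into homology-level considerations; along the way you assert that minimality of $F$ forces the induced map $H_i(L)\to H_i(\wt K)$ to be zero, which is false (for $i\le r-1$ it is an isomorphism). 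Your final suggestion---relate $K'$ to $F/L$---can be made rigorous: since $F\to\wt K$ is a quasiisomorphism, the octahedral axiom applied to $L\to F\to\wt K$ yields $\shift K'\simeq\cone(\pi)\simeq F/L$ in the derived category, and $F/L$ is semifree on $E_{>r}$, so Proposition~\ref{prop180324a} gives $H_i(F/L)=0$ for $i\le r$, whence $\inf(H(K'))\ge r$. But as written (``chasing the octahedron \dots\ one lands on'') that step is not carried out, so half of the lemma is left unproved, and the much simpler fact that the vanishing already holds at the level of the complex $K'$ itself is missed.
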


\begin{proof} Use the notation from Construction~\ref{const190804a}. Then $\sup(H(L))\leq s+r$ by Proposition~\ref{prop180324a}. Also, by definition we have $\sup(H(\wt{K}))=\sup(H(K))\leq r\leq r+s$. The long exact sequence in homology coming from~\eqref{eq190812a} implies $\sup(H(K'))\leq s+r$. Also, $\inf(H(K'))\geq \inf(K')\geq r$ because $\pi_i$ is an isomorphism for all $i<r$ by Construction~\ref{const190804a}. So, $\amp(H(K'))=\sup(H(K'))-\inf(H(K'))\leq s+r-r=s$. 
\end{proof}

\begin{prop}\label{prop180523a} Let $K$ be a homologically finite DG $A$-module and set $K'=\operatorname{Syz}_r(K)$ where $r\geq \sup(H(K))$. Let $Y$ be a homologically bounded DG $A$-module and assume that $K,Y$ are strongly Tor-independent. Then $\sup(H(K'\lotimes_AY))\leq \sup(H(Y))+r$ and $\inf(H(K'\lotimes_AY))\geq \inf(H(Y))+r$. So, $\amp(H(K'\lotimes_A Y))\leq s$; in particular, $K',Y$ are strongly Tor-independent.
\end{prop}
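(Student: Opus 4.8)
The plan is to mimic the proof of Lemma~\ref{prop180523b}, but now tensoring the short exact sequence \eqref{eq190812a} with $Y$ and tracking the homology via the resulting distinguished triangle. First I would set up the notation from Construction~\ref{const190804a}: $F\simeq K$ a minimal semifree resolution, $L=F^{(r)}$ semifree with finite semibasis $E_{\leqslant r}$ concentrated in degrees $0,1,\ldots,r$ (or more precisely in degrees between $\inf$ and $r$; in any case contained in degrees $\geqslant \inf(H(K))$ up to $r$), and the short exact sequence $0\to K'\xra{\alpha} L\xra{\pi}\wt K\to 0$ with $\wt K\simeq K$. Applying $-\lotimes_A Y$ yields a distinguished triangle
\begin{equation*}
K'\lotimes_A Y\to L\lotimes_A Y\to \wt K\lotimes_A Y\to
\end{equation*}
in $\mathsf D(A)$, where $\wt K\lotimes_A Y\simeq K\lotimes_A Y$.

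Next I would bound the outer two terms. For $L\lotimes_A Y$: the semibasis of $L$ is concentrated in degrees $n,n+1,\ldots,n+m$ with $n=\inf$ of the semibasis and $n+m=r$; since $F$ is a minimal semifree resolution of $K$ we may take $n\geqslant \inf(H(K))\geqslant \inf(H(\wt K))$, but actually for the upper bound all that matters is that the top degree is $r$, so Proposition~\ref{prop180521c} gives $\sup(H(L\lotimes_A Y))\leqslant \sup(H(Y))+r$. For a lower bound I would prefer to avoid fussing over $n$ and instead argue as in Lemma~\ref{prop180523b}: since $\pi_i$ is an isomorphism for $i<r$, the map $\alpha_i\colon K'_i\to L_i$ is... hmm, more carefully, $\inf(K')\geqslant r$, so $K'\lotimes_A Y$ has underlying complex with entries vanishing below degree $r+\inf(Y^{\natural})$; choosing a semifree resolution $G\simeq Y$ with $G$ bounded below by $\inf(H(Y))$ (minimal, if $Y$ is homologically finite — otherwise one truncates), one gets $\inf(H(K'\lotimes_A Y))=\inf(H(K'\lotimes_A G))\geqslant \inf(K')+\inf(G)\geqslant r+\inf(H(Y))$.

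Then I would feed these into the long exact sequence in homology. For the upper bound: $H_i(K'\lotimes_A Y)=0$ for $i>\max\{\sup(H(L\lotimes_A Y)),\,\sup(H(\wt K\lotimes_A Y))+1\}$. We have $\sup(H(L\lotimes_A Y))\leqslant \sup(H(Y))+r$, and by the strong Tor-independence hypothesis $\sup(H(K\lotimes_A Y))\leqslant \inf(H(K\lotimes_A Y))+s$; combined with $\inf(H(K\lotimes_A Y))\geqslant \inf(H(K))+\inf(H(Y))$ (standard) or more simply using $\sup(H(K\lotimes_A Y))\leqslant \sup(H(K))+\sup(H(Y))\leqslant r+\sup(H(Y))$ wait — I need $\sup(H(K\lotimes_A Y))+1\leqslant \sup(H(Y))+r$, i.e. $\sup(H(K\lotimes_A Y))\leqslant \sup(H(Y))+r-1$. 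This should follow because $\sup(H(K))\leqslant r$ and in fact if $\sup(H(K))=r$ we would want strict control; here is where I expect the main subtlety, and I would resolve it exactly as Gerko/the $\amp\leqslant s$ hypotheses are designed to: the strong Tor-independence of $K,Y$ gives $\amp(H(K\lotimes_A Y))\leqslant s$, hence $\sup(H(K\lotimes_A Y))\leqslant \inf(H(K\lotimes_A Y))+s\leqslant \inf(H(K))+\inf(H(Y))+s\leqslant \inf(H(K))+\sup(H(Y))+s$; since $\inf(H(K))+s\geqslant$ ... no — rather, since $\amp(H(K))\leqslant s$ one has $\inf(H(K))\geqslant \sup(H(K))-s$, but I want it the other way. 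Let me just say: $\sup(H(K\lotimes_A Y))\leqslant \sup(H(Y))+\sup(H(K))\leqslant \sup(H(Y))+r$, and if equality forced a problem one notes $\sup(H(K))<r$ is not guaranteed, so instead I would observe that in the long exact sequence the term $H_{r+\sup(H(Y))+1}(K'\lotimes_A Y)$ receives from $H_{r+\sup(H(Y))+1}(L\lotimes_A Y)=0$ and maps to $H_{r+\sup(H(Y))}(\wt K\lotimes_A Y)$; vanishing of $H_{>\,r+\sup(H(Y))}(K'\lotimes_A Y)$ then follows from $\sup(H(L\lotimes_A Y))\leqslant r+\sup(H(Y))$ alone together with $\sup(H(\wt K\lotimes_A Y))\leqslant r+\sup(H(Y))$. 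So actually $\sup(H(K'\lotimes_A Y))\leqslant r+\sup(H(Y))$ needs one more degree of room — this mismatch by one is the real obstacle, and the fix is that by Lemma~\ref{prop180523b}'s argument the connecting map $H_{i}(\wt K\lotimes_AY)\to H_{i-1}(K'\lotimes_AY)$ is surjective for $i$ large and $L\lotimes_AY$ contributes nothing above $r+\sup(H(Y))$; combined with $\amp(H(K\lotimes_AY))\leqslant s\leqslant s+r-\inf(H(K'\lotimes_AY))+\inf(H(Y))$ the bound $\sup(H(K'\lotimes_AY))\leqslant \sup(H(Y))+r$ drops out. Finally, combining $\sup(H(K'\lotimes_AY))\leqslant \sup(H(Y))+r$ with $\inf(H(K'\lotimes_AY))\geqslant \inf(H(Y))+r$ gives $\amp(H(K'\lotimes_AY))\leqslant s$, and since Lemma~\ref{prop180523b} already gives $\amp(H(K'))\leqslant s$, the pair $K',Y$ is strongly Tor-independent. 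The step I expect to be the main obstacle is the upper bound on $\sup(H(K'\lotimes_A Y))$: naively the long exact sequence only gives $\leqslant \max\{\sup(H(L\lotimes_AY)),\sup(H(\wt K\lotimes_AY))+1\}$, an off-by-one, which must be absorbed using the strong Tor-independence hypothesis on $K,Y$ (equivalently $\amp(H(K\lotimes_AY))\leqslant s$) exactly as in the artinian case of Gerko's argument.
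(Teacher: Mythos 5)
Your overall strategy is the paper's: tensor the syzygy sequence \eqref{eq190812a} with a semifree resolution $G\xra{\simeq}Y$, bound the middle term via Proposition~\ref{prop180521c}, get the lower bound from $\inf(K')\geq r$ (choosing $G$ with $\inf(G)\geq\inf(H(Y))$), and finish with the long exact sequence in homology, exactly as the paper does with the sequence $0\to K'\otimes_A G\to L\otimes_A G\to \wt{K}\otimes_A G\to 0$. Your setup, the bound $\sup(H(L\lotimes_AY))\leq\sup(H(Y))+r$, the bound $\inf(H(K'\lotimes_AY))\geq\inf(H(Y))+r$, and the final passage from these two bounds to $\amp(H(K'\lotimes_AY))\leq\amp(H(Y))\leq s$ are all correct and agree with the paper.

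The gap is at the decisive step, the upper bound on $\sup(H(K'\lotimes_AY))$, where you have the long exact sequence oriented backwards. Since $K'\lotimes_AY$ is the \emph{first} term of the triangle, the relevant segment is $H_{i+1}(\wt{K}\lotimes_AY)\to H_i(K'\lotimes_AY)\to H_i(L\lotimes_AY)$: the term $H_i(K'\lotimes_AY)$ receives the degree-lowering connecting map from $H_{i+1}(\wt{K}\lotimes_AY)$ and maps to $H_i(L\lotimes_AY)$, not the other way around as you wrote. Consequently the sequence gives $\sup(H(K'\lotimes_AY))\leq\max\{\sup(H(L\lotimes_AY)),\,\sup(H(\wt{K}\lotimes_AY))-1\}$, so the ``$+1$ off-by-one obstacle'' you spend most of the argument fighting is illusory. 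What genuinely remains to be supplied is the vanishing of $H_{i+1}(\wt{K}\lotimes_AY)$ for $i>\sup(H(Y))+r$, i.e.\ an upper bound on $\sup(H(K\lotimes_AY))$; this is precisely where the paper's proof invokes the strong Tor-independence of $K,Y$ to bound $\sup(H(\wt{K}\otimes_AG))$ before running the long exact sequence. You assert such a bound in passing but never derive it, and your substitutes do not work: the inequality $\sup(H(K\lotimes_AY))\leq\sup(H(K))+\sup(H(Y))$ is false in general (its failure is exactly why Tor-independence hypotheses are needed), and the closing step ``combined with $\amp(H(K\lotimes_AY))\leq s\leq s+r-\inf(H(K'\lotimes_AY))+\inf(H(Y))$ the bound drops out'' is not an argument: that inequality is equivalent to $\inf(H(K'\lotimes_AY))\leq r+\inf(H(Y))$, the reverse of the bound you just proved, and it yields no control on $\sup(H(K'\lotimes_AY))$. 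So as written the sup bound, and hence the strong Tor-independence of $K',Y$, is not established.
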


\begin{proof} Let $G\xra\simeq Y$ be a semifree resolution of $Y$. Let $\wt{K}$ and $L$ be as in Construction~\ref{const190804a}. Since $K,Y$ are strongly Tor-independent we have $\sup(H(\wt{K}\otimes_A G))\leq s$. Also, Proposition~\ref{prop180521c} implies $\sup(H(L\otimes_A G))\leq \sup(H(Y))+r$. To conclude the proof, consider the short exact sequence

\begin{equation}\label{eq180524a}
0\to K'\otimes_A G\to L\otimes_A G\to \wt{K}\otimes_A G\to 0
\end{equation}

\noindent and argue as in the proof of Lemma~\ref{prop180523b}.
\end{proof}

\begin{prop}\label{prop180605b} Let $K_1, K_2,\ldots, K_n$ be strongly Tor-independent, homologically finite DG $A$-modules for $n\in \bbz^{+}$ and $K_i'=\operatorname{Syz}_{r_i}(K_i)$ where $r_i\geq \sup(H(K_i))$. Then $K_1',\ldots,K_m',K_{m+1},\ldots, K_n$ are strongly Tor-independent for all $m=1,\ldots,n$.
\end{prop}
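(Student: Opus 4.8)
The plan is to prove the statement by induction on $m$, leveraging the two preceding results, Proposition~\ref{prop180523b} and Proposition~\ref{prop180523a}, together with the stability of strong Tor-independence under reordering noted in Remark~\ref{rmk180531a}. The base case $m=1$ is exactly what needs to be extracted from the hypotheses and the single-syzygy results: I would first observe that, since $K_1,\ldots,K_n$ are strongly Tor-independent, for every subset $I\subseteq\{2,\ldots,n\}$ the DG module $Y_I=\lotimes_{i\in I}K_i$ satisfies $\amp(H(Y_I))\leq s$ (this is the defining condition applied to $I$, or to a singleton/empty set when $I$ is small), and moreover $K_1,Y_I$ are strongly Tor-independent because $\amp(H(K_1\lotimes_A Y_I))=\amp(H(\lotimes_{i\in I\cup\{1\}}K_i))\leq s$ again by the definition. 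Then Proposition~\ref{prop180523a} applied to $K=K_1$, $K'=K_1'=\operatorname{Syz}_{r_1}(K_1)$, and $Y=Y_I$ gives $\amp(H(K_1'\lotimes_A Y_I))\leq s$ for every such $I$; combined with Lemma~\ref{prop180523b} (which handles the subset $I=\emptyset$, i.e.\ $\amp(H(K_1'))\leq s$) this says precisely that $K_1',K_2,\ldots,K_n$ are strongly Tor-independent.

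For the induction step, assume $2\leq m\leq n$ and that $K_1',\ldots,K_{m-1}',K_m,K_{m+1},\ldots,K_n$ are strongly Tor-independent. I want to replace $K_m$ by $K_m'$. The key point is that $K_m$ still appears in this list as an un-syzygied, homologically finite DG $A$-module, so by the reordering invariance I may bring it to the front, apply the base-case argument verbatim with $K_m$ in the role of $K_1$ and with $Y_I$ now ranging over derived tensor products of subsets of $\{K_1',\ldots,K_{m-1}',K_{m+1},\ldots,K_n\}$, and conclude that $K_m'$ together with that remaining list is strongly Tor-independent. Reordering back gives that $K_1',\ldots,K_m',K_{m+1},\ldots,K_n$ are strongly Tor-independent, completing the induction. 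One should be slightly careful that $K_m$ is homologically finite so that $\operatorname{Syz}_{r_m}(K_m)$ is defined via Construction~\ref{const190804a}, and that the modules $Y_I$ formed from the already-syzygied $K_i'$ are at least homologically bounded, which holds because each $K_i'$ has $\amp(H(K_i'))\leq s<\infty$ (Lemma~\ref{prop180523b}) and derived tensor products over the homologically bounded $A$ of homologically bounded modules with bounded homology stay homologically bounded — indeed strong Tor-independence of the inductive list forces every such $Y_I$ to have amplitude $\leq s$.

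The main obstacle, such as it is, is purely bookkeeping: making sure that in the induction step the hypothesis of Proposition~\ref{prop180523a} — namely that $K_m$ and $Y_I$ are strongly Tor-independent, not merely that $Y_I$ is homologically bounded — is verified for \emph{every} relevant subset $I$, and that this follows cleanly from the inductive assumption that the whole mixed list is strongly Tor-independent. This is where the definition's quantification over all subsets does the work: strong Tor-independence of $K_1',\ldots,K_{m-1}',K_m,K_{m+1},\ldots,K_n$ says exactly that $\amp(H(\lotimes_{i\in S}(\,\cdot\,)))\leq s$ for all subsets $S$ of the index set, which simultaneously gives $\amp(H(Y_I))\leq s$ and $\amp(H(K_m\lotimes_A Y_I))\leq s$, i.e.\ the pair $(K_m,Y_I)$ is strongly Tor-independent in the sense of the two-module case. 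Once that is laid out, the rest is a direct appeal to Proposition~\ref{prop180523a} and Lemma~\ref{prop180523b} exactly as in the base case.
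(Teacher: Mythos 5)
Your proposal is correct and follows essentially the same route as the paper, whose proof is precisely an induction on $m$ using Proposition~\ref{prop180523a} (with Lemma~\ref{prop180523b} supplying the singleton amplitude bound and Remark~\ref{rmk180531a} the harmless reordering). The details you spell out — taking $Y_I$ to be the derived tensor product over a subset of the remaining modules and checking that the pair $(K_m, Y_I)$ is strongly Tor-independent from the inductive hypothesis — are exactly the bookkeeping the paper leaves implicit.
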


\begin{proof} Induct on $m$ using Proposition~\ref{prop180523a}.
\end{proof}

\begin{prop}\label{prop190917} Let $K_1,K_2,\ldots,K_{j}$ be strongly Tor-independent DG $A$-modules, and set $K_i'=\operatorname{Syz}_{r_i}(K_i)$ where $r_i\geq \sup(H(K_i))$ for $i=1,2,\ldots, j$. If $\fm_A^n=0$, then $\fm_{H(A)}^{n-j} H(\lotimes_{i=1,\ldots,j} K_i')=0$.
\end{prop}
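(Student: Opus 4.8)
The plan is to reduce to a statement about a single syzygy module tensored with a fixed homologically bounded DG module, and then to induct on $j$. Here is the setup for the inductive step. Fix $j\geq 1$, write $K=K_j$, $K'=K_j'=\operatorname{Syz}_{r_j}(K_j)$, and let $Y=\lotimes_{i=1,\ldots,j-1}K_i'$; by Proposition~\ref{prop180605b} (applied to the syzygies already taken) the modules $K_1',\ldots,K_{j-1}',K_j$ are strongly Tor-independent, so in particular $K_j$ and $Y$ are strongly Tor-independent and $Y$ is homologically bounded. The goal then becomes: if $\fm_A^n=0$, then $\fm_{H(A)}^{n-1}\big(H(K'\lotimes_A Y)\big)=\fm_{H(A)}^{n-1}\cdot H(K'\lotimes_A Y)$ is annihilated by... wait, more precisely we want $\fm_{H(A)}^{n-(j)}H(\lotimes K_i')=0$, and the inductive hypothesis on $j-1$ would give $\fm_{H(A)}^{n-(j-1)}H(Y)=0$. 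So the single step to prove is: passing from $Y$ to $K'\lotimes_A Y$ gains one extra power of the maximal ideal in the annihilator, i.e.\ $\fm_{H(A)}\cdot\big(\text{ann of }H(Y)\big)$-worth of improvement.

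The key mechanism is the defining short exact sequence \eqref{eq190812a} tensored with a semifree resolution $G\xra\simeq Y$, namely \eqref{eq180524a}:
\begin{equation*}
0\to K'\otimes_A G\to L\otimes_A G\to \wt{K}\otimes_A G\to 0,
\end{equation*}
together with the crucial containment $\im(\alpha)\subseteq A_+L=\fm_A L$ from Proposition~\ref{prop190804b}. First I would observe that $K'\otimes_A G$, as a submodule of $L\otimes_A G$, lies inside $\fm_A(L\otimes_A G)$ because $\im(\alpha)\subseteq\fm_A L$; hence at the level of the complex, every cycle of $K'\otimes_A G$ is, after inclusion into $L\otimes_A G$, an element of $\fm_A\cdot(L\otimes_A G)$. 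Since $\wt{K}=\tau_{\leqslant r}(F)$ is obtained from a \emph{minimal} semifree resolution, $\partial^{\wt{K}}(\wt{K})\subseteq\fm_A\wt{K}$, and $L=F^{(r)}$ similarly has $\partial^L(L)\subseteq\fm_A L$; I would use this to argue that multiplication on homology by $\fm_{H(A)}$ behaves compatibly with the connecting maps, so that $\fm_{H(A)}H(\wt{K}\otimes_A G)$ already sits in the image of $H(K'\otimes_A G)\to H(L\otimes_A G)$ being killed, and more importantly that the map $H(K'\otimes_A G)\to H(L\otimes_A G)$ factors through multiplication by $\fm_{H(A)}$. Concretely: a class in $H(K'\lotimes_A Y)$ maps into $\fm_{H(A)}H(L\lotimes_A Y)$; now $L$ is semifree with finite semibasis, so $H(L\lotimes_A Y)\cong H(L)\otimes_{\ldots}$ — more usefully, by Proposition~\ref{prop180521c}-type reasoning $H(L\lotimes_A Y)$ is built from shifted copies of $H(Y)$, on which $\fm_{H(A)}^{n-(j-1)}$ vanishes by the inductive hypothesis. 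Combined with the extra power of $\fm_{H(A)}$ picked up from $\im(\alpha)\subseteq\fm_A L$, this should yield $\fm_{H(A)}^{n-(j-1)-1}=\fm_{H(A)}^{n-j}$ annihilating the relevant piece coming from $L$, and hence — chasing \eqref{eq180524a} once more, using that the cokernel piece $\wt{K}\otimes_A G\simeq K\lotimes_A Y$ is handled directly by the base case reasoning $\fm_A^n=0$ forcing $\fm_{H(A)}^n H(K\lotimes_A Y)=0$ and in fact a better bound via strong Tor-independence and amplitude control — one gets $\fm_{H(A)}^{n-j}H(K'\lotimes_A Y)=0$.

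For the base case $j=0$ (or $j=1$ after one syzygy), the claim is $\fm_{H(A)}^n H(\text{empty tensor})=\fm_{H(A)}^n H(A)=0$, which is immediate from $\fm_A^n=0$ since $\fm_{H(A)}$ is generated by the images of elements of $\fm_A$ and $H(A)$ is a subquotient of $A$ as a module over itself; alternatively $j=1$ gives $\fm_{H(A)}^{n-1}H(K_1')=0$, which follows from \eqref{eq190812a} with $\im(\alpha)\subseteq\fm_A L$ and $\fm_A^n=0$: cycles of $K_1'$ land in $\fm_A L$, and $\fm_{H(A)}^{n-1}H(L)$ is then killed because $\fm_A^{n-1}\cdot\fm_A L\subseteq\fm_A^n L=0$ up to boundaries.

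I expect the main obstacle to be the passage from the module-level containment $\im(\alpha)\subseteq\fm_A L$ to a clean statement at the level of homology — specifically, verifying that multiplication by elements of $\fm_{H(A)}$ on $H(K'\otimes_A G)$ is compatible, via the long exact sequence of \eqref{eq180524a}, with the factorization through $\fm_A(L\otimes_A G)$, so that the powers of the maximal ideal genuinely accumulate rather than being lost to boundary terms. One must be careful that $\fm_{H(A)}\cdot H(-)$ is computed using representatives in $\fm_A\cdot(-)$ and that the boundaries introduced do not require extra powers. I would handle this by working with explicit cycles and the Leibniz rule, using minimality ($\partial(F)\subseteq\fm_A F$) throughout to keep everything inside $\fm_A$-multiples, and by invoking Proposition~\ref{prop180523a} and Remark~\ref{rmk180531a} to guarantee the amplitude bounds needed so that no degree shifts spoil the counting.
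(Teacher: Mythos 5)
Your inductive strategy has a genuine gap, and it sits exactly at the point you flag as the ``main obstacle.'' The hypothesis you carry through the induction --- $\fm_{H(A)}^{\,n-(j-1)}H(Y)=0$ for $Y=\lotimes_{i<j}K_i'$ --- is a statement about homology only, whereas the extra power of the maximal ideal needed in the step from $Y$ to $K_j'\lotimes_AY$ exists only at the chain level, via $\im(\alpha_j)\subseteq\fm_AL_j$. Your key assertion that the map $H(K'\otimes_AG)\to H(L\otimes_AG)$ ``factors through multiplication by $\fm_{H(A)}$'' does not follow: a cycle of $K'\otimes_AG$ does land in $\fm_A\cdot(L\otimes_AG)$, but an element of $\fm_A\cdot(L\otimes_AG)$ that happens to be a cycle is a sum $\sum a_kw_k$ in which neither the $a_k$ nor the $w_k$ need be cycles, so its homology class need not lie in $\fm_{H(A)}H(L\otimes_AG)$; minimality of $F$ does not repair this. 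Similarly, $H(L\lotimes_AY)$ is not ``built from shifted copies of $H(Y)$'' in a usable sense: the semibasis of $L$ is spread over degrees $0,\ldots,r_j$, so one only gets a filtration whose subquotients contribute through long exact sequences, and annihilator powers then multiply with the number of filtration steps instead of being preserved. In short, knowing only that $\fm_{H(A)}^{\,n-j+1}$ kills $H(Y)$ is too weak to yield $\fm_{H(A)}^{\,n-j}H(K_j'\lotimes_AY)=0$.

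The paper's proof avoids homology-level bookkeeping of powers altogether. It tensors all $j$ chain-level inclusions together at once: with semifree resolutions $G_i\xra{\simeq}K_i'$, $\mathcal G=\otimes_{i<j}G_i$ and $\theta=\otimes_{i<j}\alpha_i$, the map $\theta\otimes\alpha_j$ carries $\mathcal G\otimes_AG_j$ into $\fm_A^{\,j}\bigl((\otimes_{i<j}L_i)\otimes_AL_j\bigr)$, one power of $\fm_A$ per factor. The technical heart --- which your sketch never supplies --- is that the induced map $H(\beta)$ is injective. This is proved by degree bounds: $H_i(\mathcal G\otimes_AG_j)=0$ for $i<r_1+\cdots+r_j$ because $\inf(H(K_i'))\geq r_i$, while the cokernels $\mathcal G\otimes_A\wt{K}_j$ and $(\otimes_{i<j}\wt{K}_i)\otimes_AL_j$ of the two relevant short exact sequences have homology vanishing in degrees above $r_1+\cdots+r_j$ by Propositions~\ref{prop180523a} and~\ref{prop180521c}, so both maps are injective on $H_i$ for $i\geq r_1+\cdots+r_j$. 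Once $H(\lotimes_{i}K_i')$ is embedded in $H\bigl(\fm_A^{\,j}((\otimes_{i<j}L_i)\otimes_AL_j)\bigr)$, the conclusion is immediate at the chain level: $\fm_A^{\,n-j}\cdot\fm_A^{\,j}(\cdots)=0$ since $\fm_A^n=0$, hence $\fm_{H(A)}^{\,n-j}$ annihilates that homology. If you wish to keep an induction on $j$, the statement to induct on must be this chain-level embedding (that $\lotimes_{i\leq j}K_i'$ maps, injectively on homology, into $\fm_A^{\,j}$ times a tensor product of the $L_i$), not the annihilation statement itself.
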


\begin{proof} Shift $K_i$ if necessary to assume without loss of generality that $\inf(H(K_i))=0$ for $i=1,\ldots,j$. For $i=1,\ldots, j$ let $G_i\xra\simeq K'_i$ be semifree resolutions, and consider the following diagram with notation as in Construction~\ref{const190804a}. \begin{equation}\label{eq180803a}
\xymatrix@C=2cm{0 \ar[r]& K_i' \ar[r]^{\subseteq} & L_i \ar[r] & \wt{K_i}\ar[r] & 0\\ & G_i \ar[u]^{\simeq}  \ar[ur]^{\alpha_i} & & & }
\end{equation}

\noindent Notice, $\text{Im}(\alpha_i)\subseteq K_i'\subseteq \fm_A L_i$ for $i=1,2,\ldots, j$. 

Set $\mathcal{G}=\otimes_{i=1,\ldots,j-1}G_i$ and consider the following commutative diagram
$$\xymatrix@C=2cm{\left(\lotimes_{i=1,\ldots,j-1} K_i'\right)\lotimes_A K_{j}'\simeq \mathcal{G}\otimes_A G_{j} \ar[r]^-----{\beta} \ar[d]^{\mathcal{G}\otimes \alpha_{j}} \ar[rd]^>>>>>>>>>>>>>>>{\theta \otimes \alpha_{j}} & \fm_A^{j}((\otimes_{i=1,\ldots,j-1}L_i)\otimes_A L_{j}) \ar[d]^{\subseteq}\\ \mathcal{G}\otimes_A L_{j} \ar[r]_{\theta \otimes L_{j}} & (\otimes_{i=1,\ldots,j-1}L_i)\otimes_A L_{j}}$$
where $\theta=\otimes_{i=1,\ldots,j-1}\alpha_i$ and $\beta$ is induced by $\theta\otimes \alpha_j$.

Claim: $H(\beta)$ is 1-1. Notice that $H_i(\mathcal{G}\otimes_A G_j)=0$ for all $i<r_1+\ldots+r_j$, so it suffices to show that $H_i(\beta)$ is 1-1 for all $i\geq r_1+\ldots+r_{j}$. To this end it suffices to show $H_i(\mathcal{G} \otimes \alpha_{j})$ and $H_i(\theta\otimes L_{j})$ are 1-1 for all $i\geq r_1+\ldots+r_{j}$. First we show this for $H_i(\mathcal{G}\otimes \alpha_{j})$. Consider the short exact sequence
\begin{equation}\label{eq180814}
0 \to \mathcal{G}\otimes_A G_{j} \xrightarrow{\mathcal{G}\otimes \alpha_{j}} \mathcal{G}\otimes_A L_{j} \to \mathcal{G}\otimes_A\wt{K_{j}}\to 0. 
\end{equation}

\noindent Proposition~\ref{prop180523a} implies \begin{align*}\sup(H(\mathcal{G}\otimes_A \wt{K_{j}}))&\leq r_1+\ldots+r_{j-1}+\sup(H(\wt{K_{j}})) \leq r_1+\ldots+r_{j}.\end{align*} Thus, the long exact sequence in homology associated to \eqref{eq180814} implies $H_i(\mathcal{G}\otimes_A \alpha_{j})$ is 1-1 for all $i\geq r_1+\ldots+r_{j}$ as desired.

Next, we show $H_i(\theta\otimes L_{j})$ is 1-1 for $i\geq r_1+\ldots+r_{j}$. Consider the exact sequence
\begin{equation}\label{eq190812}
0 \to \mathcal{G}\otimes_A L_{j} \xrightarrow{\theta\otimes L_j} (\otimes_{i=1,\ldots,j-1}L_i)\otimes_A L_{j} \to  (\otimes_{i=1,\ldots,j-1}\wt{K}_i)\otimes_A L_{j}\to 0. 
\end{equation}
\noindent The first inequality in the next display follows from Proposition~\ref{prop180521c}  \begin{align*}\sup(H((\otimes_{i=1,\ldots,j-1}\wt{K}_i)\otimes_A L_{j}))&\leq \sup(H(\otimes_{i=1,\ldots,j-1}\wt{K}_i))+r_{j}\\&\leq r_1+\ldots +r_{j-1}+r_{j}.\end{align*} Thus, the long exact sequence in homology associated to \eqref{eq190812} implies $H_i(\theta\otimes L_{j})$ is 1-1 for all $i\geq r_1+\ldots+r_{j}$. This establishes the claim.

To complete the proof it remains to show $\fm^{n-j}_{H(A)} H((\lotimes_{i=1,\ldots,j-1} K_i')\lotimes_A K_{j}')=0.$ Since $H(\beta)$ is 1-1, we have $H((\lotimes_{i=1,\ldots,j-1} K_i')\lotimes_A K_{j}')$ isomorphic to a submodule of $H(\fm_A^{j}((\otimes_{i=1,\ldots,j-1}L_i)\otimes_A L_{j}))$. So it suffices to show that $\fm_{H(A)}^{n-j}$ annihilates $H(\fm_A^j((\otimes_{i=1,\ldots,j-1}L_i)\otimes_A L_{j}))$; this annihilation holds because $\fm_A^n=0$.
\end{proof}

Here is the aforementioned version of part of~\cite[Theorem 4.5]{gerko:sdc}. 

\begin{thm}\label{thm200220a} Let $K_1,\ldots, K_n$ be strongly Tor-independent non-perfect DG $A$-modules. Then $\fm_A^n\not=0$, therefore, $n\leq s$.
\end{thm}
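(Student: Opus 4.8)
The plan is to argue by contradiction: suppose $K_1,\ldots,K_n$ are strongly Tor-independent non-perfect DG $A$-modules but $\fm_A^n=0$. The idea is to iterate the syzygy construction of Construction~\ref{const190804a} to replace the $K_i$ by their syzygies $K_i'=\operatorname{Syz}_{r_i}(K_i)$ for suitably chosen $r_i\geq\sup(H(K_i))$, keeping track of two facts simultaneously: that the syzygies remain strongly Tor-independent (this is exactly Proposition~\ref{prop180605b}), and that $\fm_{H(A)}^{n-j}$ kills the homology of the $j$-fold derived tensor product $\lotimes_{i=1,\ldots,j}K_i'$ (this is Proposition~\ref{prop190917}). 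Taking $j=n$, Proposition~\ref{prop190917} gives $\fm_{H(A)}^{0}\,H(\lotimes_{i=1,\ldots,n}K_i')=0$, i.e.\ $H(\lotimes_{i=1,\ldots,n}K_i')=0$, so $\lotimes_{i=1,\ldots,n}K_i'\simeq 0$.

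First I would verify that $\lotimes_{i=1,\ldots,n}K_i'\simeq 0$ forces some $K_i'$ to be perfect — more precisely, to be such that $H(K_i')=0$ or, after enough reductions, to contradict non-perfectness. The cleanest route: from $\lotimes_{i=1,\ldots,n}K_i'\simeq 0$ one peels off tensor factors one at a time. Since each $K_i'$ is a syzygy, it is homologically finite and $\inf(H(K_i'))>-\infty$; a nonzero homologically finite DG $A$-module over a local DG algebra cannot be tensor-annihilated by a nonzero homologically finite module unless one of them is itself acyclic — this is a Nakayama-type statement (the "$\inf$" of a derived tensor product of homologically finite modules over a local DG algebra equals the sum of the infima, up to the standard vanishing/nonvanishing dichotomy), which should follow from the behavior of minimal semifree resolutions and the fact that $A_0$ is a field. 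So some $K_{i_0}'\simeq 0$.

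Next I would translate $K_{i_0}'=\operatorname{Syz}_{r_{i_0}}(K_{i_0})\simeq 0$ back into a statement about $K_{i_0}$ itself. Going back to the short exact sequence \eqref{eq190812a}, $0\to\operatorname{Syz}_{r}(K)\to L\to\wt K\to 0$ with $L$ semifree with a finite semibasis and $\wt K\simeq K$: if $\operatorname{Syz}_r(K)\simeq 0$ then $L\xra{\simeq}\wt K\simeq K$, exhibiting $K$ as having a finite semifree resolution, i.e.\ $K$ is perfect. This contradicts the assumption that $K_{i_0}$ is non-perfect, completing the contradiction and establishing $\fm_A^n\neq 0$. The final clause $n\leq s$ then follows since $\fm_A^n\neq 0$ implies $\fm_A^n$ contributes to $H(A)$ in degrees at least $n$ (as $\fm_A=A_+$ sits in positive degrees and $A_0$ is a field, a nonzero element of $\fm_A^n$ has degree $\geq n$, so $H(A)\neq 0$ in some degree $\geq n$, whence $\sup(H(A))\geq n$; combined with $\inf(H(A))=0$ — because $A_0$ is a field and $A$ is positively graded with $A\not\simeq 0$ — we get $s=\amp(H(A))\geq n$).

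The main obstacle I anticipate is the Nakayama-type step: carefully justifying that a derived tensor product of finitely many nonzero homologically finite DG modules over the local DG algebra $(A,\fm_A)$ with $A_0$ a field cannot be acyclic unless one factor is acyclic. This requires working with minimal semifree resolutions $G_i\xra{\simeq}K_i'$ (which exist by \cite[Proposition B.7]{avramov:htecdga}), observing that $G_i\otimes_A A_0$ has zero differential, so that $H_{\inf}$ of the tensor product is computed by a tensor product of nonzero $A_0$-vector spaces and is therefore nonzero — hence the tensor product is not acyclic. I would isolate this as a short lemma before the proof of the theorem, or fold it in as the first paragraph of the proof. Everything else is bookkeeping with the long exact sequences and the results already in hand (Propositions~\ref{prop180605b} and~\ref{prop190917}).
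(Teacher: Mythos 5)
Your argument for the first assertion, $\fm_A^n\neq 0$, is correct and is essentially the paper's own proof: assume $\fm_A^n=0$, apply Proposition~\ref{prop190917} with $j=n$ to get $H(\lotimes_{i=1,\ldots,n}K_i')=0$, use minimal semifree resolutions to conclude some $H(K_l')=0$, and then read off from the exact sequence \eqref{eq190812a} that $L_l\to\wt{K_l}$ is a quasiisomorphism, so $K_l$ is perfect, a contradiction. Your explicit Nakayama-type lemma (minimality forces the semibasis of $G_i$ to begin in degree $\inf(H(K_i'))$, boundaries in the bottom degree of $G_1\otimes_A\cdots\otimes_A G_n$ lie in $\fm_A(G_1\otimes_A\cdots\otimes_A G_n)$, which vanishes there, so the bottom homology is a nonzero tensor product of $A_0$-vector spaces) is a sound and welcome elaboration of the paper's terse sentence ``Since each $K_i$ has a minimal resolution\dots''.

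The deduction of $n\leq s$, however, has a genuine gap. You argue that a nonzero element of $\fm_A^n$ has degree $\geq n$ and conclude ``so $H(A)\neq 0$ in some degree $\geq n$''; but a nonzero element of $A$ in high degree need not be detected by homology, so $\fm_A^n\neq 0$ does not imply $\sup(H(A))\geq n$. For a concrete counterexample to that inference (satisfying all the standing hypotheses of Section~\ref{sec201122a}), take $k$ of characteristic $0$ and $A=k[y]\otimes_k\Lambda_k(e)$ with $|y|=2$, $|e|=1$, $\partial(y)=e$, $\partial(e)=0$: then $H(A)\cong k$, so $s=0$, yet $y^n\in\fm_A^n$ is nonzero for every $n$. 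Thus $\fm_A^n\neq 0$ alone does not yield $n\leq s$ for the given $A$, and the ``therefore'' in the statement must be earned differently. The paper does this by soft truncation: set $A'=\tau_{\leqslant s}(A)$, a DG algebra quasiisomorphic to $A$ with $A'_i=0$ for $i>s$, so that $\fm_{A'}^{s+1}=0$ for degree reasons; the $K_i$ induce strongly Tor-independent non-perfect DG $A'$-modules, and applying the first part of the theorem over $A'$ gives $\fm_{A'}^n\neq 0$, whence $n\leq s$. Your proof needs this (or an equivalent) reduction; as written, the final step does not go through.
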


\begin{proof} Suppose $\fm_A^n=0$. Proposition~\ref{prop190917} implies that $0=\fm_{H(A)}^{0} H(\lotimes_{i=1,\ldots,n} K_i')=H(\lotimes_{i=1,\ldots,n} K_i').$ Since each $K_i$ has a minimal resolution for $i=1,\ldots, n$, we must have $H(K_l')=0$ for some $l$. Hence, $K_l$ has a semifree basis concentrated in a finite number of degrees. This contradicts our assumption that $K_i$ is not perfect for $i=1,\ldots ,n$. Therefore, $\fm_A^n\not=0.$

Now we show $n\leq s$. Soft truncate $A$ to get $A'\simeq A$ such that $\sup(A')=s$. Thus, $\fm_{A'}^{s+1}=0$. The sequence of $n$ strongly Tor-independent non-perfect DG $A$-modules gives rise to a sequence of $n$ strongly Tor-independent non-perfect DG $A'$-modules. Since $\fm_{A'}^n\not = 0$ and $\fm_{A'}^{s+1}=0$, we have $n\leq s$.
\end{proof}

\section{Proof of Theorem~\ref{thm200220c}}\label{sec201129a}

Induct on $\depth(R)$. 

Base Case: $\depth(R)=0$. Let $K$ denote the Koszul complex over $R$ on a minimal generating sequence for $\fm_R$. The condition $\depth(R)=0$ implies \begin{equation}\label{eqn201124a}\amp(H(K))=\operatorname{ecodepth}(R)=\amp(K).\end{equation} 

Claim: The sequence $K\lotimes_R N_1, \ldots, K\lotimes_R N_n$ is a strongly Tor-independent sequence of DG $K$-modules.
To establish the claim we compute derived tensor products where both $\biglotimes$ are indexed by $i\in I$: \begin{equation*} \textstyle\biglotimes_{K} (K\lotimes_R N_i) \simeq K\lotimes_R (\biglotimes_{R} N_i).\end{equation*} From this we get the first equality in the next display. \begin{align*} \textstyle\amp(H(\biglotimes_K (K\lotimes_R N_i)))&=\textstyle \amp(H(K\lotimes_R (\biglotimes_R N_i)))\\&=\textstyle\amp(H(K\otimes_R (\bigotimes_{i\in I} N_i)))\\&\leq\textstyle\amp(K\otimes_R (\bigotimes_{i\in I} N_i))\\&=\amp(K)\\&\textstyle=\amp(H(K))\end{align*} The second equality comes from the strong Tor-independence of the original sequence. The inequality and the third equality are routine, and the final equality is by~\eqref{eqn201124a}. This establishes the claim. 

A construction of Avramov provides a local homologically bounded  DG algebra $(A,\fm_A)$ such that $A\simeq K\not\simeq 0$ and $\fm_A=A_+$; see \cite{MR1132435,nasseh:survey}. The strongly Tor-independent sequence $K\lotimes_R N_1, \ldots, K\lotimes_R N_n$ over $K$ gives rise to a strongly Tor-independent sequence $M_1,\ldots,M_n$ over $A$. Now, Theorem~\ref{thm200220a} and \eqref{eqn201124a} imply $n\leq \amp(H(A))=\amp(H(K))=\operatorname{ecodepth}(R)$. This concludes the proof of the Base Case.

Inductive Step: Assume $\depth(R)>0$ and the result holds for local rings $S$ with $\depth(S)=\depth(R)-1$. For $i=1,\ldots, n$ let $N_i'$ be the first syzygy of $N_i$. Since the sequence $N_1,\ldots,N_n$ is strongly Tor-independent, so is the sequence $N_1',\ldots, N_n'$. Moreover, strong Tor-independence implies that $\bigotimes_{i\in I} N_i'$ is a submodule of a free $R$-module, for each subset $i\in \{1,\ldots,n\}$. 

Use prime avoidance to find an $R$-regular element $x\in \fm_R-\fm_R^2$. Set $\overline{R}=R/xR$. Note that $\depth(\overline{R})=\depth(R)-1$ and $\operatorname{ecodepth}(\overline{R})=\operatorname{ecodepth}(R).$ The fact that each $\bigotimes_{i\in I} N_i'$ is a submodule of a free $R$-module implies that $x$ is also $\bigotimes_{i\in I} N_i'$-regular. It is straightforward to show that the sequence $\overline{R}\lotimes_R N_1', \ldots, \overline{R}\lotimes_R N_n'$ is strongly Tor-independent over $\overline{R}$.  By our induction hypothesis we have $$n\leq \operatorname{ecodepth}(\overline{R})=\operatorname{ecodepth}(R)$$ as desired. \qed

\providecommand{\bysame}{\leavevmode\hbox to3em{\hrulefill}\thinspace}
\providecommand{\MR}{\relax\ifhmode\unskip\space\fi MR }
\providecommand{\MRhref}[2]{%
  \href{http://www.ams.org/mathscinet-getitem?mr=#1}{#2}
}
\providecommand{\href}[2]{#2}

\end{document}